\documentclass[a4paper,
              12pt,
              %draft,
              twoside
              ]
              {article}

\usepackage{latexsym, comment}
\usepackage{amsmath,amsthm,mathtools,amssymb,eqnarray}

\usepackage[utf8]{inputenc}
\usepackage[english]{babel}
\usepackage{csquotes}

\newcommand{\new}[1]{{{\color{red}\bf#1}}}

\usepackage[
dashed=false,
%firstinits=true, % deprecated
natbib=true,
sorting=nyt,%
style=ieee,%
language=british,%
citestyle=numeric-comp,%
backend=bibtex,% bibtex or biber
backref=true,%
hyperref=true,%
maxcitenames=8,%
maxbibnames=100,%
block=none,%
url=false,
isbn=false,
doi=false,
defernumbers=true,
sortcites=false]{biblatex}
\DefineBibliographyStrings{english}{%
  backrefpage  = {cited on page}, % for single page number
  backrefpages = {cited on pages} % for multiple page numbers
}

\usepackage{bookmark}
\bookmarksetup{
  numbered,
  open,
}

\usepackage{geometry}
\setlength\evensidemargin{\oddsidemargin} % same left and right margin

\usepackage{fancyhdr}

\fancyhead{}

\fancyhead[OC]{On large Sidon sets}
\fancyhead[EC]{Ingo Czerwinski and Alexander Pott}

\fancyfoot{}

\fancyfoot[CE,CO]{\thepage}

\pagestyle{fancy}

\usepackage{enumerate} % e.g (i)

% booktabs for tables:
% e.g. \begin{tabular}{@{}ccl@{}} \toprule ...
% * @{} for not too long hlines
% * \toprule, \midrule, \bottomrule for better hlines
\usepackage{booktabs}
\usepackage{xcolor}

\usepackage[normalem]{ulem}

\addbibresource{Sidon.bib}

\title{On large Sidon sets}
\author{
  Ingo Czerwinski\thanks{Otto-von-Guericke-University Magdeburg,
        Faculty of Mathematics, Institute for Algebra and Geometry,
        39106 Magdeburg, Germany,
        (\texttt{ingo@czerwinski.eu}, \texttt{alexander.pott@ovgu.de})} \!
        \thanks{Corresponding author} \,
  and Alexander Pott\footnotemark[1]
}
\date{}

% For booktabs:
% use \moreSpaceBetweenRows{1.2} or \moreSpaceBetweenRows{1.3} inside table
\newcommand{\moreSpaceBetweenRows}[1]{\renewcommand{\arraystretch}{#1}}

%       Theorem environments

%% \theoremstyle{plain} %% This is the default
\newtheorem{thm}{Theorem}[section]
\newtheorem{cor}[thm]{Corollary}

\newtheorem{prop}[thm]{Proposition}
\newtheorem{defn}[thm]{Definition}
\newtheorem{rem}[thm]{Remark}

\newcommand{\secref}[1]{Section~\ref{#1}}

\newcommand{\thmref}[1]{Theorem~\ref{#1}}
\newcommand{\corref}[1]{Corollary~\ref{#1}}

\newcommand{\propref}[1]{Proposition~\ref{#1}}

\newcommand{\remref}[1]{Remark~\ref{#1}}

\newcommand{\tabref}[1]{Table~\ref{#1}}

%Set-symbols

\newcommand{\inte}{\mathbb{Z}}

\newcommand{\binf}{\mathbb{F}_2}
\newcommand{\binfn}[1]{\mathbb{F}_{2^{#1}}}
\newcommand{\binvn}[1]{\mathbb{F}_2^{#1}}

\newcommand{\eps}{\varepsilon}

\providecommand{\transpose}[1]{#1^\intercal}

\newcommand{\set}[1]{\lbrace #1 \rbrace}
\newcommand{\sett}[2]{\lbrace #1 : #2 \rbrace}
\newcommand{\setminzero}{\setminus\set{0}}

\providecommand{\vspan}[1]{\langle #1\rangle}

\providecommand{\lin}[1]{\mathcal{L}(#1)}

\providecommand{\wtrf}[1]{W_{#1}}

\providecommand{\ftrf}[1]{\mathcal{F}_{#1}}

\providecommand{\code}{\mathcal{C}}
\providecommand{\checkMat}{\mathcal{H}}

\providecommand{\smax}[1]{s_{\max{}}(\binvn{#1})}

\providecommand{\abs}[1]{\left\lvert#1\right\rvert}

\providecommand{\floor}[1]{\left\lfloor#1\right\rfloor}

\providecommand{\highlight}[1]{\textbf{#1}}
\providecommand{\mathKeyword}[1]{\emph{#1}}

\providecommand{\keywords}[1]{\textbf{Keywords} #1 \\[12px]}
\providecommand{\subclass}[1]{\textbf{Mathematics Subject Classification (2020)} #1 }

\begin{document}

\maketitle

\renewcommand{\sectionmark}[1]{}

\renewcommand{\labelenumi}{(\alph{enumi})} % changed enum to (a), (b), ...

\begin{abstract}
A (binary) Sidon set $M$ is a subset of $\mathbb{F}_2^t$
such that the sum of four distinct elements of $M$ is never 0.
The goal is to find Sidon sets of large size.
In this note we show that the graphs of
almost perfect nonlinear (APN) functions with high linearity
can be used to construct large Sidon sets.
Thanks to recently constructed APN functions $\mathbb{F}_2^8\to \mathbb{F}_2^8$
with high linearity, we can construct Sidon sets of size 192 in $\mathbb{F}_2^{15}$,
where the largest sets so far had size 152.
Using the inverse and the Dobbertin function also gives larger Sidon sets as previously known.
Each of the new large Sidon sets $M$ in $\mathbb{F}_2^t$ yields a binary linear code
with $t$ check bits, minimum distance 5, and a length not known so far.
Moreover, we improve the upper bound for the linearity of arbitrary APN functions.
\end{abstract}

\keywords{Sidon sets, vectorial boolean functions, almost perfect nonlinear (APN) functions, linear binary codes.}

\subclass{Primary: 11B13, 94D10.\\ Secondary: 94B05.}

% 94-XX Information and communication theory, circuits
% 94Axx Communication, information
% 94A60 Cryptography

% 94-XX Information and communication theory, circuits
% 94Bxx Theory of error-correcting codes and error-detecting codes
% 94B05	Linear codes (general theory)
% 94B65 Bounds on codes
% 94D10 Boolean functions [See also 06E30] {For connections with circuits and networks, see 94C11}

% 51-XX Geometry {For algebraic geometry, see 14-XX; for differential geometry, see 53-XX}
% 51Exx Finite geometry and special incidence structures
% 51E15 Finite affine and projective planes (geometric aspects)

% 11-XX Number theory
% 11Bxx Sequences and sets
% 11B13 Additive bases, including sumsets [See also 05B10]

% 05-XX Combinatorics {For finite fields, see 11Txx}
% 05Bxx Designs and configurations {For applications of design theory, see 94C30}
% 05B25 Combinatorial aspects of finite geometries [See also 51D20, 51Exx]

\section{Introduction}

The following question is easy to state but seems difficult to answer:
What is the largest number of elements
in a set, in which an appropriate addition is defined,
so that no sum appears twice?

In the early 1930s Sidon \cite{Sidon1932, Sidon1935} was the first who discussed
such sets in the integers.
Later Babai and S{\'o}s \cite{BabaiSos1985} generalised
such sets to arbitrary groups and called them
\highlight{Sidon sets}. In this note, we only discuss binary Sidon sets, that are
Sidon sets in $\binvn{t}$, the $t$-dimensional vector space over the binary field $\binf$:

\begin{defn}[\cite{BabaiSos1985}]
  A subset $M$ of $\binvn{t}$  is called a \mathKeyword{Sidon} set
  if $m_1+m_2\ne m_3+m_4$ for all distinct $m_1,m_2,m_3,m_4\in M$.
  Equivalently, the sum of any four distinct elements of $M$ is different from $0$.
\end{defn}

We emphasise that, by definition, every subset of a Sidon set
$M$ and every coset $M+b= \sett{m+b}{m\in M}$, $b\in\binvn{t}$,
is also a Sidon set.

The main research question on Sidon sets in $\binvn{t}$ is
to find sets of large size.
For dimensions $t\geq 11$, the maximum size is unknown (see \tabref{tab:SidonSizesOverview}).
Consequently, it is interesting to improve lower bounds by constructions,
as well as upper bounds by theoretical arguments.

The trivial upper bound (already mentioned in \cite{BabaiSos1985})
arises from the fact that the sums of two distinct elements
of a Sidon set $M$ in $\binvn{t}$ are distinct and non-zero,
hence
\[
\binom{\abs{M}}{2} = \frac{\abs{M}(\abs{M}-1)}{2}
\leq \abs{\binvn{t}\setminzero}.
\]

So far, progress on the upper bound has only been made in terms of coding theory.
This uses a one-to-one correspondence between %sum-free
Sidon sets that contain zero and linear codes with minimum distance $\geq 5$
(\cite[p. 33]{MacWilliamsSloane1978}, \cite{CohenZemor1999}, \cite{CzerwinskiPott2024}).

The first improvement of the trivial bound was made in 1993 by
Brouwer and Tolhuizen \cite{BrouwerTolhuizen1993}. They improved the case $t$ odd.
The case $t$ even was recently improved by the authors
in \cite{CzerwinskiPott2024}. We state here only the Brouwer-Tolhuizen version.

\begin{prop} \label{prop:max-sidon-upper-bound}
  For any $t\geq 6$, an upper bound for the maximum size of a Sidon set
  in $\binvn{t}$ is given by
  \begin{equation} % equRefTrivialBound
  \smax{t} \leq
  \begin{cases}
    \sqrt{2^{t+1}} - 2            &\text{ for $t$ odd},\\
    \floor{\sqrt{2^{t+1}} + 0.5}  &\text{ for $t$ even}.
  \end{cases}
  \end{equation}
\end{prop}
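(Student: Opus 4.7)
The proof naturally splits along the parity of $t$. Both cases start from the trivial pigeonhole inequality
\[
\binom{|M|}{2}\leq 2^t-1, \qquad\text{i.e.,}\qquad |M|(|M|-1)\leq 2^{t+1}-2,
\]
recalled in the introduction.

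For $t$ even, the bound in the proposition is exactly this trivial inequality solved for $|M|$ and rewritten. Completing the square yields $|M|\leq \tfrac{1}{2}+\sqrt{2^{t+1}-\tfrac{7}{4}}$; since $2^{t+1}$ is not a perfect square for $t$ even, $\sqrt{2^{t+1}}$ is irrational and no boundary coincidence arises. A direct check that $m=\lfloor\sqrt{2^{t+1}}+0.5\rfloor+1$ fails $m(m-1)\leq 2^{t+1}-2$ then gives the claimed form. I expect this case to be routine algebraic bookkeeping.

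For $t$ odd, the trivial inequality still permits $|M|=2^{(t+1)/2}$, so the two values $|M|=2^{(t+1)/2}$ and $|M|=2^{(t+1)/2}-1$ must be separately excluded. The plan is to translate $M$ so that $0\in M$ (Sidon sets are translation invariant) and to observe that $M\setminus\{0\}$ is then automatically sum-free: a relation $m_1+m_2+m_3=0$ with distinct $m_i\in M\setminus\{0\}$ would, together with $0$, give four distinct elements of $M$ summing to $0$, contradicting the Sidon property. Using the correspondence recalled in the excerpt, $M\setminus\{0\}$ can be identified with the column set of the parity-check matrix of a binary linear code $C$ of length $|M|-1$, redundancy $t$, and minimum distance at least $5$.

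The main obstacle is then to prove that no such code $C$ has length $2^{(t+1)/2}-1$ or $2^{(t+1)/2}-2$ when $t$ is odd: this is exactly the Brouwer--Tolhuizen improvement in \cite{BrouwerTolhuizen1993}. My approach would be to apply the MacWilliams identities to the weight enumerator of $C$ and extract a contradiction at those two extremal lengths by combining non-negativity of the dual weight coefficients with a parity-of-weights constraint available only when the redundancy $t$ is odd. The care needed in that linear-programming style argument is the heart of the proof, and the absence of the analogous parity obstruction is precisely why no such improvement appears in the even case.
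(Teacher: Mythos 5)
This proposition is not proved in the paper at all: the odd case is quoted from Brouwer and Tolhuizen \cite{BrouwerTolhuizen1993}, and the even case, as stated here, is nothing more than the trivial bound $\abs{M}(\abs{M}-1)\leq 2^{t+1}-2$ rewritten. Your treatment of the even case is correct and complete in substance: if $m_0=\floor{\sqrt{2^{t+1}}+0.5}$ then $m_0(m_0-1)\leq 2^{t+1}-\tfrac14$, and since $m_0(m_0-1)$ is an even integer it is at most $2^{t+1}-2$, while $(m_0+1)m_0>2^{t+1}-\tfrac14$ because $\sqrt{2^{t+1}}$ is irrational for $t$ even; so the stated floor is exactly the trivial bound. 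Your reduction of the odd case is also the right one: the trivial bound allows $\abs{M}=2^{(t+1)/2}$, so exactly the two values $2^{(t+1)/2}$ and $2^{(t+1)/2}-1$ must be excluded, and after translating $0$ into $M$ the set $M\setminus\{0\}$ is a sum-free Sidon set, hence (when it spans $\binvn{t}$; the degenerate case is handled by the bound in dimension $t-1$) the column set of a parity-check matrix of a $[\abs{M}-1,\abs{M}-1-t,\geq 5]$ code.

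The genuine gap is that this exclusion of the two extremal code lengths \emph{is} the Brouwer--Tolhuizen theorem, i.e.\ the entire non-trivial content of the proposition, and your proposal replaces its proof with a plan (``apply the MacWilliams identities \dots\ extract a contradiction \dots\ linear-programming style'') that is never executed. Nothing in the proposal identifies which dual-weight coefficient goes negative, nor where the claimed parity-of-weights constraint for odd redundancy comes from; and it is not evident that a pure MacWilliams/LP argument closes at these lengths --- Brouwer and Tolhuizen in fact prove a sharpened Johnson bound for \emph{linear} codes via a coset-counting refinement (bounding the number of minimum-weight vectors in a coset of weight $2$ using that two such vectors must have disjoint supports), which is a different mechanism from the one you sketch. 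A side remark: your closing claim that ``no such improvement appears in the even case'' is contradicted by the paper itself, which notes that the even case was subsequently improved by the authors in \cite{CzerwinskiPott2023}; the stated proposition simply omits that refinement.
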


The improvement in \cite{CzerwinskiPott2024} is that we may subtract
$1$ or $2$ in the case that $t$ is even, depending on some technical conditions.

For small dimensions $t\leq 14$, better upper bounds are known derived from coding theory
(see e.g Table 2 of \cite{CzerwinskiPott2024} or \tabref{tab:SidonSizesOverview} in this paper).

On the construction side, the following is known.
In the case of dimension $t=2n$ even, the best
known infinite families are Sidon sets of size
$2^n+1$ if $n$ is odd and $2^n+2$ if $n$ is even.
Sidon sets of size $2^n+1$ are obtained from non-primitive BCH codes
which are constructed as shortenings of codes with parameters
$[2^n+1, 2^n+1-2n,5]$ (see \cite[p. 586]{MacWilliamsSloane1978} or (C2) construction in \cite{Nagy2025}).
Another construction is by Carlet, Mesnager and Picek
 \cite{CarletMesnager2022, CarletPicek2023}.
They have shown that the multiplicative subgroup $M$ of size $2^n+1$
in the finite field $\binfn{2n}$ is a Sidon set
in the additive group of the field.
This set can also be constructed from a certain Goppa code,
see \cite{Nagy2025}.
If $n$ is even, then $0\notin M$ and $M\cup \{0\}$ is Sidon too.
These Sidon sets are of maximum size for $t=4,6,8$.
For $t=10$ the construction leads to $33$
but the largest possible Sidon set in dimension $10$ has size $34$,
see \tabref{tab:SidonSizesOverview}.

In the case of odd dimension $t=2n-1$, less is known on infinite families of Sidon sets.
There is only one construction which uses the shortening of a BCH code
(see \cite[p. 586]{MacWilliamsSloane1978} or
construction (C3) in \cite{Nagy2025}).
It leads to Sidon sets in $\binvn{2n-1}$
of size $2^{n-1} + 2^{n/2}$ for $n$ even and
of size $2^{n-1} + 2^{\frac{n-1}{2}}$ for $n$ odd.
For $t=7$, this construction gives a Sidon set of maximum possible size.
For $t>7$ and $t$ odd, several sporadic examples are known which give
larger Sidon sets, see, for instance, \tabref{tab:SidonSizesOverview}.
To summarise:

\begin{rem} \label{rem:classical}
  The best infinite families of Sidon sets in $\binvn{t}$ for $t\geq 6$
  known so far lead to Sidon sets of size
  $$
  \begin{cases}
    2^n + 2                     &\text{ for $t=2n$ and $n$ even},\\
    2^n + 1                     &\text{ for $t=2n$ and $n$ odd},\\
    2^{n-1} + 2^{n/2}           &\text{ for $t=2n-1$ and $n$ even},\\
    2^{n-1} + 2^{\frac{n-1}{2}} &\text{ for $t=2n-1$ and $n$ odd}.\\
  \end{cases}
  $$
  We call these the \highlight{classical} parameters of large Sidon sets.
\end{rem}

Another important class of Sidon sets in even dimensions $t=2n$
are Sidon sets of size $2^n$ and
stems from almost perfect nonlinear functions.
These functions are quite important from a cryptographic perspective.

\begin{defn}[\cite{NybergKnudsen93}]
	A function $F\colon \binvn{n} \to \binvn{n}$ is called \mathKeyword{almost perfect nonlinear} (APN)
	if for every $a\in\binvn{n}\setminzero$ and $b\in\binvn{n}$ the equation
	$
	F(x) + F(x+a) = b
	$
	has at most $2$ solutions in $\binvn{n}$.
\end{defn}

Several classes of APN functions are known: there are some infinite families,
see \cite{CarletBook2020} for a summary, but also many sporadic constructions
(which are quite important for us), see
\cite{BeierleLeander2020}.

As early as 1969, Lindström used the graph of the cube function $x^3$,
which is APN, as an example of a Sidon set \cite{Lindstrom69}.
The relationship between APN functions and Sidon sets
is as follows (see e.g \cite{CarletMesnager2022, CarletPicek2023}):
\begin{prop} \label{prop:APNGraphSidon}
    A function $F\colon \binvn{n} \to \binvn{n}$ is APN
	if and only if its graph $G_F=\sett{(x,F(x))}{x\in\binvn{n}}$
%\subseteq\binvn{n}\times\binvn{n} $
	is a Sidon set in $\binvn{2n}$.
\end{prop}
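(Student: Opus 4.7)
The plan is to unpack each condition into an elementary statement about tuples in $\binvn{n}$ and match them up directly. Since $F$ is a function, two points $(x_i,F(x_i))$ and $(x_j,F(x_j))$ in $G_F$ coincide iff $x_i=x_j$, so distinct graph points correspond bijectively to distinct arguments. A tuple of four distinct graph points sums to $0$ in $\binvn{2n}$ precisely when there exist pairwise distinct $x_1,x_2,x_3,x_4 \in \binvn{n}$ with
\[
x_1+x_2+x_3+x_4 = 0 \quad\text{and}\quad F(x_1)+F(x_2)+F(x_3)+F(x_4) = 0.
\]
So $G_F$ fails to be Sidon exactly when such a quadruple exists.

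For the direction \emph{Sidon $\Rightarrow$ APN}, I would argue contrapositively: suppose $F$ is not APN, so there exist $a \neq 0$, $b$, and three distinct solutions $y_1,y_2,y_3$ of $F(x)+F(x+a)=b$. Since the solution set is closed under $x \mapsto x+a$, it must contain at least four elements, say $y_1, y_1+a, y_2, y_2+a$, and these are pairwise distinct (otherwise $y_2 \in \{y_1,y_1+a\}$). Their sum is $0$, and each pair contributes the same value $b$ to the $F$-sum, so the $F$-values also sum to $0$. This produces a forbidden quadruple, contradicting the Sidon property.

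For the converse \emph{APN $\Rightarrow$ Sidon}, I would take a would-be forbidden quadruple $x_1,x_2,x_3,x_4$ as above and set $a := x_1+x_2$. Because the four $x_i$ are distinct, $a \neq 0$, and the relation $x_1+x_2+x_3+x_4=0$ forces $x_4 = x_3+a$. Writing $b := F(x_1)+F(x_2)$, the condition on $F$-values gives $F(x_3)+F(x_4)=b$ as well, so both $x_1$ and $x_3$ (together with $x_2=x_1+a$ and $x_4=x_3+a$) are solutions of $F(x)+F(x+a)=b$. Distinctness of the $x_i$ guarantees four distinct solutions, contradicting APN.

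No step looks genuinely difficult; the only care needed is the bookkeeping of the distinctness conditions, in particular verifying that after pairing $\{x_1,x_2\}$ and $\{x_3,x_4\}$ the common difference $a$ is nonzero and that $x_3 \notin \{x_1,x_1+a\}$. I would make this explicit so that the equivalence is seen to hold element by element, rather than only at the level of cardinalities.
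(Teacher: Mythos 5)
Your argument is correct and complete; the paper itself states this proposition without proof, as it is the well-known link between APN functions and Sidon sets, and your two contrapositive directions (pairing solutions of $F(x)+F(x+a)=b$ into $\{y,y+a\}$, and conversely extracting $a=x_1+x_2$, $b=F(x_1)+F(x_2)$ from a vanishing quadruple) constitute exactly the standard proof. The distinctness bookkeeping you flag is handled correctly, so there is nothing to add.
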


Note that the graph of an APN function is in $\binvn{2n}$ and
has size $2^n$ which is slightly worse
than the other constructions mentioned above.
But they will play an important role in \secref{sec:NewConstruction}
to construct large Sidon sets in odd dimensions.

Now, that we have seen constructions of large Sidon sets,
we will introduce a new construction of large Sidon sets in the next section.
In its general form, an existing Sidon set is intersected with subspaces.
In \secref{sec:Linearity} we will then show that the linearity of an APN function
gives us information about the largest intersection of its graph with hyperplanes.
In addition, we improve the upper bound for the linearity of an arbitrary APN function.
In \secref{sec:NewConstruction} we then explicitly construct large Sidon sets
from APN functions with high linearity,
and in \secref{sec:linear-codes} we translate them into coding theory terms.
Finally, in \secref{sec:summary}, we summarise all results and give an overview table
about the maximum size of Sidon sets in small dimensions and related bounds and constructions.

\section{Sidon sets in subspaces}
\label{sec:subspaces}

The idea of constructing large Sidon sets
is based on the fact that every subset of a Sidon set is also
a Sidon set.
If $M$ is a Sidon set in $\binvn{t}$ and $U$ an $s$-dimensional
linear subspace of $\binvn{t}$, then
$M\cap U$ is a Sidon subset of $\binvn{t}$
and therefore a Sidon subset in the $s$-dimensional vector space $U$.

Also the intersection with affine subspaces can be used to construct Sidon sets,
since with $M$ also $M+b$ is Sidon for $b\in\binvn{t}$.
Hence, the intersection of $M$ with an
affine subspace $U+b$ leads to Sidon sets in $U$
by intersecting $M+b$ with $U$ (note $|M\cap (U+b)|=|(M+b)\cap U|$).

Our goal is to construct large Sidon sets using this procedure.
In order to get a large Sidon set in this way, you need a subspace $U$
which contains ``many'' elements of the Sidon set $M$.
But one expects that large Sidon sets are ``equally'' distributed
within $\binvn{t}$, hence $\abs{M\cap U}\approx |M|/2^{t-s}$ is the expectation.
In other words: if we intersect a large Sidon set with a \highlight{hyperplane},
we expect that the size of the intersection is in the range of $|M|/2$.
However, the upper bound reduces the size only by a factor $1/\sqrt{2}$ and
we present examples where the intersection size of $M$ with a hyperplane is in the range $|M|/\sqrt{2}$ (see \secref{sec:NewConstruction}).

Claude Carlet pointed out that the construction in Theorem 4.1 of
\cite{CarletPicek2023} may be also reformulated as a construction
of a large Sidon set as the intersection of the graph of an APN function
with an appropriate subgroup (not a hyperplane).

\section{Linearity and hyperplanes}
\label{sec:Linearity}

Throughout the paper $\cdot$ denotes the usual inner product in $\binvn{t}$,
which is $a\cdot b = a_1 b_1 + \dots + a_t b_t$ for
$a = (a_1, \dots, a_t), b = (b_1, \dots, b_t) \in\binvn{t}$.

Let $F\colon \binvn{n} \to \binvn{m}$ be a function.
The \mathKeyword{Walsh transform} of $F$ is the function
$\wtrf{F}\colon \binvn{n} \times \binvn{m} \to \inte$ defined by
\[
  \wtrf{F}(a,b) = \sum_{x\in \binvn{n}} (-1)^{a\cdot x + b\cdot F(x)}.
\]
The \mathKeyword{linearity} of $F$ is defined by
\[
  \lin{F} = \max_{
    a\in\binvn{n},b\in\binvn{m}\setminzero
  }\abs{\wtrf{F}(a,b)}.
\]

In fact, the Walsh transform and linearity of $F$ can also be formulated
as properties of the graph $G_F$ inside its ambient space $\binvn{n+m}$
(see \cite[p. 71]{CarletBook2020}):

\begin{defn} \label{def:FourierLin}
  Let $M$ be a subset of $\binvn{t}$. The {Fourier transform}
  of $M$ is the mapping $\ftrf{M} \colon \binvn{t} \to \inte$
  defined by
  \[
    \ftrf{M}(a) = \sum_{m\in M} (-1)^{a\cdot m}
  \]
  and the linearity of $M$ is defined by
  \[
    \lin{M} = \max_{a\in\binvn{t}\setminzero}\abs{\ftrf{M}(a)}.
  \]
\end{defn}

We remark that $\ftrf{M}(a)$ and $\lin{M}$ are even if and only if $\abs{M}$ is even,
and that the linearity of a function equals the linearity of its graph
(see \cite[p. 71]{CarletBook2020}):
%the linearity of the graph $G_F$ of a function (as in \defnref{def:FourierLin})
%is the same as the linearity of the function $F$.

\begin{prop} \label{prop:FourierEqualsWalsh}
If $F\colon \binvn{n} \to \binvn{m}$ is a function and $G_F$ its graph
then $\lin{F} = \lin{G_F}$.
%In other words, the linearity of the function equals the linearity of its graph.
\end{prop}
\begin{proof}
We have
\begin{align*}
\lin{F} &= \max_{
            a\in\binvn{n},b\in\binvn{m}\setminzero
            }\abs{W_F(a,b)} \\
    &= \max_{
            a\in\binvn{n},b\in\binvn{m}\setminzero
            }\abs{\sum_{x\in \binvn{n}} (-1)^{a\cdot x + b\cdot F(x)}}\\
    &= \max_{
            (a,b)\in\binvn{n}\times(\binvn{m}\setminzero)
            }\abs{\sum_{x\in \binvn{n}} (-1)^{(a,b)\cdot (x,F(x))}}
            \\
    &= \max_{
            (a,b)\in\binvn{n}\times\binvn{m}\setminus{(0,0)}
            }\abs{\sum_{x\in \binvn{n}} (-1)^{(a,b)\cdot (x,F(x))}}
    = \lin{G_F}.
\end{align*}
The last equality holds because of
$\sum_{x\in \binvn{n}} (-1)^{a\cdot x}=0$
and therefore
\[\sum_{x\in \binvn{n}} (-1)^{(a,0)\cdot (x,F(x))}=0\]
for $a\in\binvn{n}\setminzero$.
\end{proof}

The following Proposition is well known
using the terminology of functions (see \cite{ChabaudVaudenay1995}).
The interpretation in the terminology of sets seems to be folklore.
We include it here for the convenience of the reader.

\begin{prop} \label{prop:WalshLinConnection}
  Let $M$ be a subset of $\binvn{t}$,
  $a\in \binvn{t}\setminzero$,
  $H_a = \sett{g\in\binvn{t}}{a\cdot g = 0}$
  and let $b\in \binvn{t}\setminus H_a$.
  Then $\ftrf{M}(a)$ has different representations using intersection numbers with affine hyperplanes:
  \begin{align*}
    \ftrf{M}(a) &= \abs{M\cap H_a} - \abs{(M+b)\cap H_a} \\
                &= \abs{M} - 2\cdot\abs{(M+b)\cap H_a} \\
                &= 2\cdot\abs{M\cap H_a} - \abs{M}
  \end{align*}
  and
  \begin{align*}
    &\abs{M\cap H_a} \qquad   = \frac{\abs{M} + \ftrf{M}(a)}{2} \\
    &\abs{(M+b)\cap H_a} = \frac{\abs{M} - \ftrf{M}(a)}{2}.
  \end{align*}
\end{prop}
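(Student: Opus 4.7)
The plan is to split the defining sum of $W_M(a)$ according to the value of the linear form $a\cdot m$, and then observe that translation by $b$ exchanges the two classes.

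First I would write
\[
W_M(a) \;=\; \sum_{m\in M}(-1)^{a\cdot m}
\;=\; \abs{\sett{m\in M}{a\cdot m=0}} \;-\; \abs{\sett{m\in M}{a\cdot m=1}}
\;=\; \abs{M\cap H_a} - \abs{M\setminus H_a}.
\]
Combining this with the obvious partition $\abs{M}=\abs{M\cap H_a}+\abs{M\setminus H_a}$ immediately yields the third representation $W_M(a)=2\abs{M\cap H_a}-\abs{M}$, and solving for $\abs{M\cap H_a}$ gives the first boxed identity.

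Next I would use the hypothesis $b\notin H_a$, which over $\binf$ is equivalent to $a\cdot b=1$. Then for any $m\in\binvn{t}$ we have $a\cdot(m+b)=a\cdot m+1$, so the translation $m\mapsto m+b$ is a bijection from $M\setminus H_a$ onto $(M+b)\cap H_a$. Hence $\abs{(M+b)\cap H_a}=\abs{M\setminus H_a}=\abs{M}-\abs{M\cap H_a}$. Substituting into the expression $W_M(a)=\abs{M\cap H_a}-\abs{M\setminus H_a}$ gives the first stated form $W_M(a)=\abs{M\cap H_a}-\abs{(M+b)\cap H_a}$, and using the partition identity once more yields $W_M(a)=\abs{M}-2\abs{(M+b)\cap H_a}$, from which the second boxed identity follows.

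There is no real obstacle here; the only subtlety is noticing that in $\binf$ the condition $b\notin H_a$ forces $a\cdot b=1$, which is exactly what makes translation by $b$ swap the two cosets of $H_a$ and thus converts intersection counts in $H_a$ between $M$ and $M+b$ into each other. Everything else is bookkeeping, and the five identities are just the three equivalent forms of $W_M(a)$ together with the two immediate inversions for $\abs{M\cap H_a}$ and $\abs{(M+b)\cap H_a}$.
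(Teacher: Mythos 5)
Your proof is correct and follows essentially the same route as the paper: split the Walsh sum according to the two cosets of $H_a$, identify $M\setminus H_a$ with $(M+b)\cap H_a$ via translation by $b$, and recover the remaining identities from the partition $\abs{M}=\abs{M\cap H_a}+\abs{(M+b)\cap H_a}$. You merely make explicit the bijection $m\mapsto m+b$ that the paper's one-line computation leaves implicit.
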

\begin{proof}
  \begin{align*}
    \ftrf{M}(a) &= \sum_{m\in M} (-1)^{a\cdot m} \\
                &= \sum_{m\in M\cap H_a} 1 \quad + \sum_{m\in (M+b)\cap H_a} -1 \\
                &= \abs{M\cap H_a} - \abs{(M+b)\cap H_a}
  \end{align*}
  The remaining follows from $\abs{M} = \abs{M\cap H_a} + \abs{(M+b)\cap H_a}$.
\end{proof}

\begin{cor} \label{cor:HyperplaneLin}
  If $M$ is a subset of $\binvn{t}$ then there exists a linear hyperplane $H$ such that either
  \[
    \abs{M\cap H} = \frac{\abs{M} + \lin{M}}{2} \qquad\qquad\qquad\qquad\quad
  \]
  or
  \[
    \abs{(M+b)\cap H} = \frac{\abs{M} + \lin{M}}{2} \qquad
      \text{ for every } b\in\binvn{t}\setminus H.
  \]
\end{cor}
\begin{proof}
  From the definition of $\lin{M}$ it follows
  that $\lin{M}=\abs{\ftrf{M}(a)}$ for some $a\in\binvn{t}\setminzero$.
  Let $H_a = \sett{g\in\binvn{t}}{a\cdot g = 0}$ and $b\in\binvn{t}\setminus H_a$ as above.
  \propref{prop:WalshLinConnection} shows the follwing:
  $\abs{M\cap H_a}  = \frac{\abs{M} + \ftrf{M}(a)}{2} = \frac{\abs{M} + \lin{M}}{2}$ if $\ftrf{M}(a)>0$,
  and
  $\abs{(M+b)\cap H_a} = \frac{\abs{M} - \ftrf{M}(a)}{2} = \frac{\abs{M} + \lin{M}}{2}$ otherwise.
\end{proof}

\begin{cor} \label{cor:HyperplaneSidonFromLin}
  Let $M$ be a  Sidon set in $\binvn{t}$.
  Then there exists a Sidon set in $\binvn{t-1}$
  of size $\frac{\abs{M} + \lin{M}}{2}$.
\end{cor}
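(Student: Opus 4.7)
The plan is to combine Corollary~\ref{cor:HyperplaneLin} with the subspace principle from \secref{sec:subspaces}. By Corollary~\ref{cor:HyperplaneLin}, there exist a hyperplane $H\subset\binvn{t}$ and an element $b\in\binvn{t}\setminus H$ such that at least one of the two intersections $M\cap H$ or $(M+b)\cap H$ has size exactly $\frac{\abs{M}+\lin{M}}{2}$.

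Next, I would invoke the fact, already recorded at the start of \secref{sec:subspaces}, that the translate of a Sidon set is Sidon and that the intersection of a Sidon set with a subspace is again Sidon. So $M\cap H$ is Sidon, and since $M+b$ is a Sidon set, $(M+b)\cap H$ is Sidon as well. In either of the two cases produced by the corollary, we therefore obtain a Sidon set sitting inside the hyperplane $H$ whose size equals $\frac{\abs{M}+\lin{M}}{2}$. As $H$ is a $(t-1)$-dimensional $\binf$-vector space, it is isomorphic to $\binvn{t-1}$, and a Sidon set is preserved under any $\binf$-linear isomorphism, so we obtain a Sidon set in $\binvn{t-1}$ of the required size.

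There is essentially no obstacle here: the corollary is a direct packaging of Corollary~\ref{cor:HyperplaneLin} together with the trivial observation that (affine) subspace intersections of Sidon sets are Sidon. The only point worth flagging is that one must use \emph{both} alternatives of Corollary~\ref{cor:HyperplaneLin}, i.e.\ handle the translate case $M+b$ on equal footing with $M$ itself, which is legitimate precisely because the Sidon property is translation invariant.
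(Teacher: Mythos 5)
Your proposal is correct and follows exactly the paper's route: apply Corollary~\ref{cor:HyperplaneLin}, note that $M+b$ is Sidon too, and use that the intersection with a hyperplane (a $(t-1)$-dimensional space) of a Sidon set is again Sidon. The paper's own proof is just a one-line version of the same argument.
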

\begin{proof}
    Use \corref{cor:HyperplaneLin} and note that $M+b$
    is Sidon too.
\end{proof}

\begin{cor}\label{cor:main}
  Let $F:\binvn{n}\to\binvn{n}$ be an APN function.
  Then there exists a Sidon set in $\binvn{2n-1}$ of size
  $2^{n-1}+\frac{\lin{F}}{2}$.
\end{cor}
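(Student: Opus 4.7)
The plan is to obtain the result as a direct composition of the three preceding facts, with essentially no new content beyond bookkeeping of sizes.

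First, I would invoke the proposition characterising APN functions via their graphs: since $F\colon\binvn{n}\to\binvn{n}$ is APN, the graph $G_F=\sett{(x,F(x))}{x\in\binvn{n}}$ is a Sidon set in $\binvn{2n}$, and clearly $\abs{G_F}=2^n$. Next, I would apply Proposition~\ref{prop:LinOnSetAndGraphCoincide} to transfer linearity from $F$ to its graph, obtaining $\lin{G_F}=\lin{F}$.

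With these two facts in hand, the final step is to feed $M:=G_F$ into Corollary~\ref{cor:HyperplaneSidonFromLin}: since $M$ is Sidon in $\binvn{2n}$, there exists a Sidon set in $\binvn{2n-1}$ of size
\[
\frac{\abs{M}+\lin{M}}{2}=\frac{2^n+\lin{F}}{2}=2^{n-1}+\frac{\lin{F}}{2},
\]
which is exactly the claimed bound. The ``Sidon set in $\binvn{2n-1}$'' arises as $M\cap H_a$ (or $(M+b)\cap H_a$) for the hyperplane $H_a$ witnessing the maximum Walsh coefficient, viewed as a subset of the $(2n-1)$-dimensional space $H_a$.

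There is no real obstacle here: the statement is a clean corollary rather than an independent argument. The only thing to be careful about is checking that $\lin{F}$ is even (so that $\lin{F}/2$ is an integer), but this follows from the remark preceding Proposition~\ref{prop:LinOnSetAndGraphCoincide} together with the fact that $\abs{G_F}=2^n$ is even, ensuring $W_{G_F}(a)$ is even for every $a$ and hence $\lin{G_F}=\lin{F}$ is even.
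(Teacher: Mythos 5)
Your proposal is correct and matches the paper's proof exactly: the paper also derives the corollary as a direct consequence of Proposition~\ref{prop:LinOnSetAndGraphCoincide} (linearity of $F$ equals linearity of $G_F$) combined with Corollary~\ref{cor:HyperplaneSidonFromLin} applied to the Sidon set $G_F$ of size $2^n$. Your additional parity check on $\lin{F}$ is a harmless extra observation that the paper leaves implicit.
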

\begin{proof}
    Direct consequence of \propref{prop:APNGraphSidon},
    \propref{prop:FourierEqualsWalsh},
    and \corref{cor:HyperplaneSidonFromLin}.
\end{proof}
In the next section, we will apply \corref{cor:main} to several highly linear APN functions.
But first, as an interesting application,
we use it to give an upper bound on the linearity of APN functions.

Not so much is known on that topic (see \cite{Carlet2018})
apart of that an arbitrary APN function $F\colon \binvn{n} \to \binvn{n}$
cannot have linearity $2^n$ for $n\geq 3$ (\cite[p. 424]{CarletBook2010}).
For certain classes of APN functions smaller bounds are known.
In \cite{Carlet2021Gamma} a bound is given,
that holds for all \textbf{known} APN functions.
But it is open if it is valid for \textbf{all} APN functions.
If $F$ is a quadratic APN function then its
linearity is $\leq 2^{n-1}$ (see \cite{Carlet2018}).
Here quadratic means that all component functions have degree $2$,
or, in other words, the function
$x\mapsto F(x+a)+F(x)+F(a)+F(0)$ is linear for $a\in\binvn{n}$.
Let us formulate a general upper bound:

\begin{cor} \label{cor:ApnLinUpperBound}
 If $F\colon \binvn{n} \to \binvn{n}$ is APN then $\lin{F} \leq 2^n-4$ for $n\geq 3$.
\end{cor}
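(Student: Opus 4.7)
The plan is to combine \corref{cor:main} with the Brouwer--Tolhuizen upper bound of \propref{prop:max-sidon-upper-bound} in odd dimension, sharpened by a parity observation to rule out the penultimate even value of $\lin{F}$.

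First I would argue that $\lin{F}$ is always even. Since the graph $G_F$ of an APN function $F\colon\binvn{n}\to\binvn{n}$ has $|G_F|=2^n$, the remark following the definition of $\lin{M}$ forces every Walsh coefficient $W_{G_F}(a)$ to have the same parity as $|G_F|$, so all $W_{G_F}(a)$ are even. By \propref{prop:LinOnSetAndGraphCoincide} this transfers to $\lin{F}$, giving $\lin{F}\in 2\inte$. Since the known result mentioned just before the corollary already excludes $\lin{F}=2^n$, it now suffices to exclude $\lin{F}=2^n-2$ in order to deduce $\lin{F}\le 2^n-4$.

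Next I would argue by contradiction: suppose $\lin{F}=2^n-2$. Applying \corref{cor:main} produces a Sidon set in $\binvn{2n-1}$ of size
\[
2^{n-1}+\frac{2^n-2}{2}=2^{n-1}+2^{n-1}-1=2^n-1.
\]
On the other hand, \propref{prop:max-sidon-upper-bound} applied to the odd dimension $t=2n-1$ yields
\[
\smax{2n-1}\le 2^{(2n-1+1)/2}-2=2^n-2,
\]
which contradicts the existence of a Sidon set of size $2^n-1$. Hence $\lin{F}\ne 2^n-2$, and together with the evenness of $\lin{F}$ and the exclusion of $2^n$, the claimed bound $\lin{F}\le 2^n-4$ follows.

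The main (and only) conceptual obstacle I anticipate is precisely the step from $2^n-2$ down to $2^n-4$: the Sidon-theoretic contradiction only directly rules out the single value $2^n-2$, and an a priori possibility such as $\lin{F}=2^n-3$ would leave a gap. The parity observation removes this gap cleanly, and is the one ingredient the argument cannot do without.
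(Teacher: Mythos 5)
Your proof is correct and follows essentially the same route as the paper: the paper also rules out $\lin{F}=2^n-2$ by noting that \corref{cor:main} would then yield a Sidon set of size $2^n-1$ in $\binvn{2n-1}$, contradicting \propref{prop:max-sidon-upper-bound}. Your explicit parity argument (all Walsh coefficients of $G_F$ are even since $\abs{G_F}=2^n$) is a point the paper leaves implicit, and it is indeed the ingredient needed to pass from excluding $2^n$ and $2^n-2$ to the stated bound $2^n-4$.
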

\begin{proof}
Assume $n\geq 4$. Since $\abs{G_F}$ is even,
it follows that $\lin{F}$ is also even.
Assuming $\lin{F} = 2^{n}-2$ would lead to a Sidon set
in $\binvn{2n-1}$ of size $2^{n-1} + 2^{n-1} - 1 = 2^n-1$ due to \corref{cor:main}.
But this is not possible because of \propref{prop:max-sidon-upper-bound}
and the case $n\geq 4$ is shown.
For $n=3$ it is an exercise to show that only one
APN function exists up to CCZ-equivalence
(see \cite[p. 28]{CarletBook2020} for the definition of CCZ-equivalence),
and that it is quadratic with linearity 4, hence $\lin{F}=4$.
\end{proof}

The connection between Sidon sets and APN functions also shows
that no component function of an APN function can have weight $1$,
i.e. no component can have all its values 0 with only one exception.
To see this, let $F\colon \binvn{n} \to \binvn{n}$ be a function
and let $a\in\binvn{n}\setminzero$. Then the \mathKeyword{component function}
$F_a \colon \binvn{n}\to \binf$ of $F$
is defined as $x\mapsto a\cdot F(x)$.

\begin{cor}\label{cor:APNComponentFunctionWeight}
  Let $F\colon \binvn{n} \to \binvn{n}$ be an APN function,
  $a\in\binvn{n}\setminzero$,  $\lambda\in\binf$
  and define $w_{a,\lambda} = \abs{\sett{x\in\binvn{n}}{F_a(x)=\lambda}}$.
  Then there exists a Sidon set of size $2^n-w_{a,\lambda}$ in $\binvn{2n-1}$.
\end{cor}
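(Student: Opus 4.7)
The plan is to apply \propref{prop:WalshLinConnection} to the graph $G_F$ with a direction chosen so that the two hyperplane-intersection sizes recover exactly $w_{a,0}$ and $w_{a,1}$. This is a direct specialization of the scheme used in \corref{cor:main}, where instead of picking the Walsh direction that achieves the maximum $\lin{F}$, I would pick the direction whose Walsh coefficient encodes the component-function weight of $F_a$.

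Concretely, I would work with $a' = (0,a) \in \binvn{n}\times\binvn{n} = \binvn{2n}$, which is nonzero since $a \ne 0$. Choose $c \in \binvn{n}$ with $a\cdot c = 1$ and set $b' = (0,c)$; then $b' \notin H_{a'} = \sett{(x,y)\in\binvn{2n}}{a\cdot y = 0}$. The key computation is
\[
W_{G_F}(a') \;=\; \sum_{x\in\binvn{n}} (-1)^{(0,a)\cdot(x,F(x))} \;=\; \sum_{x\in\binvn{n}} (-1)^{a\cdot F(x)} \;=\; w_{a,0}-w_{a,1}.
\]
Substituting $|G_F|=2^n$ into \propref{prop:WalshLinConnection} yields
\[
|G_F \cap H_{a'}| \;=\; \tfrac{2^n + W_{G_F}(a')}{2} \;=\; w_{a,0}, \qquad |(G_F+b')\cap H_{a'}| \;=\; \tfrac{2^n - W_{G_F}(a')}{2} \;=\; w_{a,1}.
\]

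Since $F$ is APN, $G_F$ is a Sidon set in $\binvn{2n}$, and so is its translate $G_F+b'$; hence both intersections are Sidon subsets of the $(2n-1)$-dimensional hyperplane $H_{a'} \cong \binvn{2n-1}$. Using $w_{a,1-\lambda} = 2^n - w_{a,\lambda}$, for either value of $\lambda \in \binf$ one of these two constructions delivers a Sidon set of the required size $2^n - w_{a,\lambda}$ in $\binvn{2n-1}$. The main obstacle is essentially cosmetic: one has to recognize that the direction $(0,a)$ is the right choice so that its Walsh coefficient on $G_F$ equals the weight imbalance $w_{a,0}-w_{a,1}$ of the component function $F_a$; after that, the statement falls out of \propref{prop:WalshLinConnection} exactly as \corref{cor:HyperplaneSidonFromLin} did for the maximal Walsh coefficient.
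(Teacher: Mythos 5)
Your proposal is correct and follows essentially the same route as the paper: intersect the graph $G_F$ (a Sidon set, since $F$ is APN) with the hyperplane determined by the direction $(0,a)$ and with its translate, obtaining Sidon sets of sizes $w_{a,0}$ and $w_{a,1}=2^n-w_{a,0}$ inside a $(2n-1)$-dimensional space. The paper's proof is just a terser version of the same argument; your explicit computation of $W_{G_F}((0,a))=w_{a,0}-w_{a,1}$ via \propref{prop:WalshLinConnection} only spells out what the paper leaves implicit.
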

\begin{proof}
  The set $H_a=\sett{(x,y)\in\binvn{2n}}{(0,a)\cdot(x,y) = 0}$
  defines a hyperplane in $\binvn{2n}$.
  Therefore the sets $G_F\cap H_a$ and $(G_F+b)\cap H_a$ are
  Sidon sets, where $b\in \binvn{2n}\setminus H_a$.
  If $\lambda=0$, we have $|G_F\cap H_a|=w_{a,\lambda}$,
  and if $\lambda=1$, we have $|(G_F+b)\cap H_a|=w_{a,\lambda}$,
  see also \propref{prop:WalshLinConnection}.
\end{proof}

\begin{cor}
  For $n\geq 3$ there is no APN function $F\colon \binvn{n} \to \binvn{n}$
  with component function $x_1x_2\dotsc x_n$.
\end{cor}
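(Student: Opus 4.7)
The plan is to argue by contradiction using Corollary~\ref{cor:APNComponentFunctionWeight} together with the odd-dimension case of Proposition~\ref{prop:max-sidon-upper-bound}. Suppose that some APN function $F\colon\binvn{n}\to\binvn{n}$ had $F_a(x)=x_1x_2\cdots x_n$ as a component function, for some $a\in\binvn{n}\setminzero$. The key numerical observation is that the monomial $x_1x_2\cdots x_n$, viewed as a Boolean function on $\binvn{n}$, evaluates to $1$ only at the all-ones vector and to $0$ elsewhere. Consequently, in the notation of Corollary~\ref{cor:APNComponentFunctionWeight}, the quantity $w_{a,1}=\abs{\sett{x\in\binvn{n}}{F_a(x)=1}}$ equals exactly $1$.

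I would then feed this into Corollary~\ref{cor:APNComponentFunctionWeight} with $\lambda=1$. The corollary immediately produces a Sidon set in $\binvn{2n-1}$ of size $2^n-w_{a,1}=2^n-1$. On the other hand, Proposition~\ref{prop:max-sidon-upper-bound} applied to the odd dimension $t=2n-1$ gives the upper bound $2^{(2n-1+1)/2}-2=2^n-2$ for the size of any Sidon set in $\binvn{2n-1}$. Since $2^n-1>2^n-2$, these two conclusions are incompatible, and the assumed $F$ cannot exist.

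There is no real obstacle: the statement is essentially a one-line consequence of the two results already at our disposal, and in fact parallels the reasoning used in Corollary~\ref{cor:ApnLinUpperBound}, where a weight/linearity bound is translated into a Sidon-size obstruction via the same Brouwer--Tolhuizen inequality. The only delicate point worth noting is that Proposition~\ref{prop:max-sidon-upper-bound} is meant to be applied in the regime where the bound is sharper than the trivial count, which is the same regime in which the surrounding theory of this section has content; for any such $n$ the strict inequality $2^n-1>2^n-2$ immediately closes the argument.
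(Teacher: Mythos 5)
Your argument is correct and coincides with the paper's own proof: both observe that $w_{a,1}=1$ for the monomial $x_1x_2\dotsc x_n$, apply \corref{cor:APNComponentFunctionWeight} to obtain a Sidon set of size $2^n-1$ in $\binvn{2n-1}$, and derive a contradiction with the odd-dimension bound $2^n-2$ from \propref{prop:max-sidon-upper-bound}. No further comment is needed.
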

\begin{proof}
Assume $n\geq 4$.
Note that $x_1x_2\dotsc x_n$ gives a component function with $w_{a,1}=1$,
which results in a Sidon set of size $2^n-1$ in $\binvn{2n-1}$
due to \corref{cor:APNComponentFunctionWeight}.
But this Sidon set cannot exist because of \propref{prop:max-sidon-upper-bound}
and the case $n\geq 4$ is shown.
%The case $n=3$ is easy to calculate.}
The case $n=3$ follows again from the
uniqueness of such an APN function.
\end{proof}

\section{New construction} \label{sec:NewConstruction}

It follows from \corref{cor:main} that it is sufficient
to know the linearity of an APN function
in order to know the best possible size
when intersecting its graph with a hyperplane.
Therefore we discuss in this section the known linearities of APN functions.
There are many good research papers and books which give
a good overview about APN functions and their Walsh spectra, we refer to
\cite{BudaghyanBook, CarletBook2020, Pott2016}.

A classical result from Chabaud and Vaudenay \cite{ChabaudVaudenay1995} shows that the
linearity of a function $F\colon \binvn{n} \to \binvn{n}$ is lower bounded by $2^{(n+1)/2}$
and that can be reached for $n$ odd only.
Functions reaching this bound are called \mathKeyword{almost bent}.
They are APN and are characterised
by having only the values $0$, $2^{(n+1)/2}$ and $-2^{(n+1)/2}$ as their Walsh coefficients \cite{ChabaudVaudenay1995}.
Let us begin with quadratic APN functions $F\colon \binvn{n} \to \binvn{n}$.
Several infinite families of quadratic APN functions as well as sporadic ones are known.

If $n$ is odd, quadratic APN functions are almost bent, see \cite{CarletBook2020}.
We may apply \corref{cor:main} to these APN functions and obtain Sidon sets
of size $2^{n-1} + 2^{\frac{n-1}{2}}$ in $\binvn{2n-1}$ for $n$ odd.
This size matches the size of Sidon sets with classical parameters
(see \remref{rem:classical}).
Hence we do not obtain Sidon sets larger than the known ones
but we remark that all APN functions for $n$ odd which are not almost bent
will lead to improvements.

The situation is more interesting if $n$ is even.
The linearities of the known \highlight{infinite} families of
quadratic APN functions for $n$ even are $2^{n/2+1}$.
Applying \corref{cor:main} results in Sidon sets of size
$2^{n-1} + 2^{n/2}$ in $\binvn{2n-1}$ for $n$ even.
Again, this size matches the size of Sidon sets with classical parameters
(see \remref{rem:classical}).
Many \highlight{sporadic} quadratic APN functions for
$n=6$ and $n=8$ are known, see
\cite{BeierleLeander2020,BrowningDillonKiblerMcQuistan2009,YuPerrin2022,YuWangLi2014}.
It is interesting that some of them have the best possible linearity $2^{n-1}$
(see \secref{sec:Linearity}).
In the case $n=6$ (see \cite{BrowningDillonKiblerMcQuistan2009}) there is one
quadratic function with linearity $32$.
Unfortunately, this only yields a Sidon set of size $48$ in $\binvn{11}$,
which is not an improvement as it is equal to the best previously known example,
see \tabref{tab:SidonSizesOverview}.
In the case $n=8$ we have examples with linearity $32$ (the one with classical parameters),
%and also with linearity $64$ and $128$ (see \cite{BeierleLeander2020})
with linearity $64$ and, thanks to Beierle and Leander \cite{BeierleLeander2020},
with linearity $128$.
If we use \corref{cor:main} for a function with linearity $128$,
we obtain the following remarkable improvement
on the size of the largest known Sidon set
(which was $152$) in dimension $15$:

\begin{thm}
There exists a Sidon set in $\binvn{15}$ of size $192$.
\end{thm}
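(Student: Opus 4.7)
The plan is to apply Corollary~\ref{cor:main} with $n = 8$ to a suitably chosen APN function. The corollary guarantees that, starting from any APN $F\colon \binvn{n}\to\binvn{n}$, one obtains a Sidon set in $\binvn{2n-1}$ whose size is $2^{n-1} + \lin{F}/2$. Setting $n=8$, this becomes $2^{7} + \lin{F}/2 = 128 + \lin{F}/2$, so to reach size $192$ I need to exhibit an APN function $F\colon \binvn{8}\to\binvn{8}$ with linearity $\lin{F} = 128$.

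The existence of such a function is not something to prove from scratch; I would cite the sporadic APN functions on $\binvn{8}$ constructed in \cite{BeierleLeander2020}, where examples with linearity $128 = 2^{n-1}$ are exhibited. Concretely, I would pick one specific representative from that classification (quadratic, with Walsh spectrum containing the value $\pm 128$), and note that $128$ is in fact the largest linearity a quadratic APN function on $\binvn{8}$ can have, since quadratic APN functions satisfy $\lin{F}\le 2^{n-1}$ as recalled in Section~\ref{sec:WalshAndLinearity}. So the hypothesis of Corollary~\ref{cor:main} is met.

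With such $F$ in hand, the proof itself is one line: Corollary~\ref{cor:main} produces a Sidon set in $\binvn{2\cdot 8 - 1} = \binvn{15}$ of size $2^{7} + 128/2 = 128 + 64 = 192$, which is the claimed bound.

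The only genuine obstacle here is conceptual rather than technical: the whole point of the theorem is that it hinges on the \emph{existence} of APN functions on $\binvn{8}$ attaining the extremal linearity $2^{n-1}$, which is a non-trivial recent discovery of \cite{BeierleLeander2020}. Once that input is granted, the construction is a direct specialisation of Corollary~\ref{cor:main}, and there are no estimates, case distinctions, or further lemmas needed. The improvement over the previously best known value $152$ comes entirely from the fact that the new APN examples have linearity $128$ instead of the $32$ or $64$ attained by previously known quadratic APN functions on $\binvn{8}$.
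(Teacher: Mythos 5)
Your proposal is correct and matches the paper's own argument exactly: the paper obtains the size-$192$ Sidon set by applying Corollary~\ref{cor:main} to one of the quadratic APN functions on $\binvn{8}$ with linearity $128$ from \cite{BeierleLeander2020}, giving $2^{7}+128/2=192$ in $\binvn{15}$. No further comment is needed.
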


A maximal Sidon set in $\binvn{15}$ of size $192$ is given in \tabref{tab:maxSidonExampleN15}.
In the table we use the standard integer representation of vectors in $\binvn{t}$:
the integer $\sum_{i=0}^{t-1} a_i2^i$ in 2-adic representation
``is'' the vector $(a_0,\ldots, a_{t-1})$.

According to \cite{BeierleLeanderPerrin2022}, there are exactly four CCZ-inequivalent
APN functions on $8$ variables with linearity $128$
(see \cite[p. 28]{CarletBook2020} for the definition of CCZ-equivalence).
These lead to Sidon sets of size 192.

We showed that we obtain exactly one Sidon set of size 192 (up to affine permutations)
for each of the four APN functions. Moreover, these four Sidon sets are inequivalent
(which means that there is no affine permutation mapping one onto the other).
This has also been pointed out to us by Gábor Nagy.
We do not see a reason why CCZ-inequivalent APN functions must give rise
to inequivalent Sidon sets, therefore we checked the inequivalence of the
Sidon sets explicitely using the Magma algebra system \cite{Magma}:
we showed that the corresponding extended codes are inequivalent, see also
\cite{BrowningDillonKiblerMcQuistan2009}.
One of the anonymous reviewers pointed out that the inequivalence
can also be shown by calculating the exclude distribution of the Sidon sets
(see \cite{Thornburgh2024}).

\begin{table}[ht!]
  \moreSpaceBetweenRows{1.2}
  \centering
  \begin{tabular}{@{}crrrrrrrrc@{}}
\{
&    0,&     1,&     2,&     4,&     8,&    16,&    32,&    64,& \\
&  128,&   256,&   423,&   512,&   732,&   861,&  1024,&  1177,& \\
& 1228,&  1335,&  1350,&  1481,&  1594,&  1934,&  2048,&  2415,& \\
& 2719,&  2824,&  3421,&  3761,&  3839,&  3893,&  3904,&  4096,& \\
& 4233,&  4446,&  4726,&  4834,&  5013,&  5199,&  5322,&  5366,& \\
& 5410,&  5515,&  5713,&  6172,&  6405,&  6451,&  6521,&  6642,& \\
& 6672,&  6862,&  6888,&  6914,&  7358,&  7681,&  7909,&  8040,& \\
& 8192,&  8284,&  8292,&  8488,&  8700,&  8913,&  9191,&  9219,& \\
& 9398,&  9817,&  9875,& 10262,& 10378,& 11071,& 11153,& 11647,& \\
&11706,& 11752,& 11911,& 11998,& 12197,& 12295,& 12334,& 12392,& \\
&12484,& 12555,& 12646,& 12927,& 12965,& 13110,& 13682,& 13858,& \\
&14068,& 14138,& 14176,& 14236,& 14391,& 14430,& 14572,& 14579,& \\
&14808,& 15005,& 15008,& 15205,& 15238,& 15583,& 15843,& 16008,& \\
&16384,& 16409,& 16593,& 16714,& 17032,& 17491,& 17639,& 17697,& \\
&17741,& 17942,& 18157,& 18201,& 18261,& 18342,& 18423,& 18531,& \\
&18582,& 18738,& 18819,& 18941,& 19109,& 19425,& 19507,& 19666,& \\
&19728,& 19779,& 20100,& 20225,& 20384,& 20565,& 21605,& 21933,& \\
&21994,& 22077,& 22228,& 22377,& 22647,& 22650,& 22705,& 23047,& \\
&23486,& 23495,& 23582,& 23680,& 24332,& 24346,& 24443,& 24444,& \\
&24590,& 24780,& 24803,& 25048,& 25185,& 25405,& 25968,& 25997,& \\
&26016,& 26553,& 26949,& 27003,& 27092,& 27310,& 27613,& 27821,& \\
&28255,& 28550,& 28586,& 29213,& 29224,& 29264,& 29342,& 29386,& \\
&29429,& 29463,& 29688,& 29786,& 29926,& 30412,& 30663,& 30900,& \\
&31350,& 31488,& 31563,& 31768,& 31854,& 31907,& 32079,& 32365\;&
\}
  \end{tabular}

  \caption{ \label{tab:maxSidonExampleN15}
    Example of maximal Sidon set in $\binvn{15}$ of size 192.
  }
\end{table}

\begin{rem}
    No infinite family of APN functions on $\binvn{n}$
    with linearity $2^{n-1}$ is known so far,
    and there is also no proof that such a family cannot exist,
    see \cite{BeierleCarlet2023} for a thorough investigation
    of such APN functions. If APN functions on $\binvn{n}$
    with linearity $2^{n-1}$ exist for all even $n$,
    we would have Sidon sets of size
    $2^{n-1} + 2^{n-2} = 3\cdot2^{n-2}$ in $\binvn{2n-1}$.
\end{rem}

Next we discuss the inverse function $F(x)=x^{-1}$ on $\binfn{n}$ (here $F(0)=0$).
This is an APN function if and only if $n$ is odd \cite{Nyberg1993}.
The Walsh coefficients are the elements $\equiv 0 \bmod 4$
in the interval
$$
[-2^{n/2+1}+1, 2^{n/2+1}+1],
$$
see \cite{LachaudWolfmann1990}, and therefore:

\begin{thm}
  The linearity of the inverse function equals to
  $$
    \floor{2^{n/2+1}+1} - \left( \floor{2^{n/2+1}+1} \bmod 4\right)
  $$
  and leads to a Sidon set of size
  $$
    2^{n-1} + \frac{1}{2}\Bigl(\floor{2^{n/2+1}+1} - \left( \floor{2^{n/2+1}+1} \bmod 4\right)\Bigr)
  $$
  in $\binvn{2n-1}$ for odd $n\geq 3$.
\end{thm}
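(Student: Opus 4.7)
The plan is to combine the Walsh spectrum characterization of the inverse function stated just before the theorem (due to Lachaud-Wolfmann) with \corref{cor:main}. Since the map $F(x)=x^{-1}$ on $\binfn{n}$ (with $F(0):=0$) is APN for odd $n\geq 5$, \corref{cor:main} produces a Sidon set in $\binvn{2n-1}$ of size $2^{n-1}+\lin{F}/2$. It therefore suffices to compute $\lin{F}$ exactly.

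To do so, I would invoke the quoted description of the spectrum: the values $W_F(a,b)$ for $b\in\binvn{n}\setminzero$ are precisely the integers in the interval $[-2^{n/2+1}+1,\, 2^{n/2+1}+1]$ that are $\equiv 0\pmod 4$. Because $n$ is odd, $2^{n/2+1}$ is irrational, so both endpoints are non-integers and the set of integers lying in the interval is just the set of integers in $[\lceil -2^{n/2+1}+1\rceil,\ \floor{2^{n/2+1}+1}]$. Writing $N := \floor{2^{n/2+1}+1}$, the largest multiple of $4$ not exceeding $N$ is $N-(N\bmod 4)$, which matches the formula in the statement. A short check confirms that the positive side of the interval dominates: any negative multiple of $4$ lying in the interval has absolute value at most the largest multiple of $4$ not exceeding $2^{n/2+1}-1$, which is at most the largest multiple of $4$ not exceeding $2^{n/2+1}+1$. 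Hence $\lin{F}$ equals $N-(N\bmod 4)$.

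Substituting this into $2^{n-1}+\lin{F}/2$ from \corref{cor:main} yields the claimed Sidon set cardinality in $\binvn{2n-1}$. The only step that requires genuine thought (rather than pure arithmetic) is the dominance of the positive half over the negative half, which is immediate from the slight rightward shift of the Lachaud-Wolfmann interval. As a sanity check, small cases $n=5,7,9$ give $2^{n/2+1}+1\approx 12.31,\ 23.63,\ 46.25$ and hence $\lin{F}=12,20,44$, consistent with the formula.
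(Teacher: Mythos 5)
Your proposal is correct and follows essentially the same route as the paper: it feeds the Lachaud--Wolfmann description of the Walsh spectrum of $x\mapsto x^{-1}$ into \corref{cor:main} and identifies the linearity as the largest multiple of $4$ not exceeding $\floor{2^{n/2+1}+1}$. The only difference is cosmetic: where the paper verifies that the positive end of the interval dominates via an explicit computation with $\eps=2^{n/2+1}+1-\floor{2^{n/2+1}+1}$ and the two residues mod $4$, you observe directly that any negative multiple of $4$ in the interval has absolute value at most $2^{n/2+1}-1<2^{n/2+1}+1$, which is a cleaner way to settle the same point.
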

\begin{proof}
  The upper bound for the Walsh coefficients would give a linearity of
  \begin{equation}
      \floor{2^{n/2+1}+1} - \left( \floor{2^{n/2+1}+1} \bmod 4\right)
  \end{equation}
  and the lower bound of
  \begin{multline}
    \abs{\floor{-2^{n/2+1}+1} + 4 - \left( \floor{-2^{n/2+1}+1} \bmod 4\right)} =\\
      -\floor{-2^{n/2+1}+1} - 4 + \left( \floor{-2^{n/2+1}+1} \bmod 4\right).
  \end{multline}
  We define $\eps = 2^{n/2+1}+1 - \floor{2^{n/2+1}+1}$.
  Then
  $$
  \floor{-2^{n/2+1}+1} = -2^{n/2+1}+1 - (1-\eps) = -2^{n/2+1}+\eps.
  $$
  Hence
 \begin{equation}
    \floor{2^{n/2+1}+1} + \floor{-2^{n/2+1}+1} = 2^{n/2+1}+1 - \eps + -2^{n/2+1}+\eps = 1
 \end{equation}
  and
 \begin{equation}
    \left( \floor{2^{n/2+1}+1} \bmod 4\right) \ne \left( \floor{-2^{n/2+1}+1} \bmod 4\right).
 \end{equation}
  Subtracting (3) from (2) and using (4) gives
  $$
    5 - \left( \floor{2^{n/2+1}+1} \bmod 4\right) - \left( \floor{-2^{n/2+1}+1} \bmod 4\right)
  $$
  which is always $\geq 0$ because of (5).
  This shows the linearity of the inverse.
  The existence of the Sidon set follows from \corref{cor:main}.
\end{proof}

The inverse function is not almost bent
for odd $n\geq 5$ because of its Walsh coefficients. This results in an improvement in size
compared to the Sidon sets with classical parameters (see \remref{rem:classical}).
A few concrete values for the size of Sidon sets using the inverse function are contained in \tabref{tab:SidonSizesOverview}.

Let us finally discuss the Dobbertin functions
\cite{Dobbertin2001}.
It is an infinite family of APN functions $\binvn{n}\to\binvn{n}$
which is not almost bent \cite{CanteautCharpinDobbertin2000}.
The Walsh spectrum and the linearity is not known, but
there is a conjecture about it, see
\cite{BudaghyanCalderiniCarletDavidovaKaleyski2020}.
The authors of that paper computed the Walsh spectrum
for several cases $n\leq 35$;
in the range of \tabref{tab:SidonSizesOverview},
the cases $n=5$ and $n=10$ are important:
the linearities are $12$ ($n=5$) and $80$ ($n=10$).
For $n=15$, the linearity is $576$,
a value which is larger than the linearity of the inverse function.
But let us first note that the Dobbertin function
$\binvn{10}\to \binvn{10}$ can be used to improve the best known lower bound
on the size of Sidon sets in
dimension $t=19$, once more using \corref{cor:main}:

\begin{thm}
    There exists a Sidon set of size $552$ in $\binvn{19}$.
\end{thm}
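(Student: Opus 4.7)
The plan is to invoke \corref{cor:main} directly, applied to the Dobbertin APN function $D\colon\binvn{10}\to\binvn{10}$. Since \corref{cor:main} produces, from any APN function $F\colon\binvn{n}\to\binvn{n}$, a Sidon set in $\binvn{2n-1}$ of size $2^{n-1}+\lin{F}/2$, the task reduces entirely to plugging in the two relevant numerical inputs for the Dobbertin function in dimension $n=10$.

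The two inputs we need are (i) that the Dobbertin function is indeed APN on $\binvn{10}$, which is part of Dobbertin's original infinite family, and (ii) that its linearity in this specific dimension equals $80$, as stated in the paragraph just above the theorem (this value is taken from the computations in \cite{BudaghyanCalderiniCarletDavidovaKaleyski2020}, where the Walsh spectrum of the Dobbertin function is determined explicitly for $n\leq 35$). With $n=10$ and $\lin{D}=80$, \corref{cor:main} yields a Sidon set in $\binvn{19}$ of size
\[
 2^{n-1}+\frac{\lin{D}}{2} = 2^{9}+\frac{80}{2} = 512+40 = 552,
\]
which is exactly the claimed bound.

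There is no real obstacle in the argument: all the work has already been packaged into \propref{prop:LinOnSetAndGraphCoincide} and \corref{cor:HyperplaneSidonFromLin}, whose combination is precisely \corref{cor:main}. The only nontrivial content is the external input $\lin{D}=80$, and once that is accepted the construction of the Sidon set is automatic: take the graph $G_D\subseteq\binvn{20}$ (a Sidon set of size $2^{10}=1024$ by APN-ness), find a hyperplane $H\subseteq\binvn{20}$ on which the Walsh coefficient of $G_D$ attains its maximum absolute value $80$, and intersect either $G_D$ or a suitable translate $G_D+b$ with $H$; by \corref{cor:HyperplaneLin} one of these intersections has size $(1024+80)/2=552$, and being a subset of a Sidon set it is itself Sidon inside the $19$-dimensional space $H$.
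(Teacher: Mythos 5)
Your proof is correct and follows exactly the paper's intended argument: apply \corref{cor:main} to the Dobbertin APN function on $\binvn{10}$ with the computed linearity $80$ from \cite{BudaghyanCalderiniCarletDavidovaKaleyski2020}, giving $2^{9}+80/2=552$. Nothing further is needed.
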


\begin{rem}
Dobbertin functions exist for all $n$ divisible by $5$.
If the conjecture in
\cite{BudaghyanCalderiniCarletDavidovaKaleyski2020}
about the Walsh spectrum of these functions is true, then
the general formula for the linearity of the Dobbertin function is
$2^{3n/5} + 2^{2n/5}$
and leads to Sidon sets of size
$$2^{n-1} + 2^{3n/5-1} + 2^{2n/5-1}\mbox{\ in }\binvn{2n-1}.$$
For the case $n$ is divisible by $5$, when the Dobbertin function is defined, this size is larger than the size of Sidon sets with classical parameters (see \remref{rem:classical}).
\end{rem}

\section{Linear Codes}
\label{sec:linear-codes}

In this short section we want to point out explicitly the consequences of our Sidon sets to codes.
We use the well known one-to-one correspondence
between %sum-free
Sidon sets that contain zero and linear codes with minimum distance $\geq 5$
(\cite[p. 33]{MacWilliamsSloane1978}, \cite{CohenZemor1999}, \cite{CzerwinskiPott2024}), see
also \cite{CarletCharpinZinoviev1998} for the APN case.

A (binary) \mathKeyword{linear code} $\code$ of \mathKeyword{length} $m$
and \mathKeyword{dimension} $k$ is a $k$-dimensional linear subspace $\code$ of $\binvn{m}$
and $c\in \code$ is called a \mathKeyword{code word} of $\code$.
We consider all vectors to be row vectors.
If the \mathKeyword{minimum distance} of $\code$ is $d$,
that is the minimum number of non-zero entries
of all non-zero code words of $\code$, then $\code$ is called a $[m,k,d]$-code.
We call a $[m,m-t,d]$-code, a linear code of length $m$, with $t$ \mathKeyword{check bits}
and with minimum distance $d$.
Such a $[m,m-t,d]$-code $\code$ can be defined as the kernel of a matrix $\checkMat$
with $t$ rows, $m$ columns and rank $t$ which is called \mathKeyword{parity-check matrix}
of $\code$, i.e $\code = \sett{v\in\binvn{n}}{\checkMat\cdot \transpose{v} = 0}$.
Then a %sum-free
Sidon set $M$ in $\binvn{t}$ with $0\in M$, $\abs{M}\geq t+2$
and $\dim\vspan{M} = t$
(that is the dimension of the linear span of $M$)
leads to a linear code $\code_M$ of length $\abs{M}-1$,
with $t$ check bits and with minimum distance $\geq 5$.
Thus the $\abs{M}-1$ non-zero elements $M\setminzero$
form the columns of the parity-check matrix of $\code_M$.
Since every Sidon set contains, without loss of generality, the zero vector, we obtain:
{\it{}Every Sidon set $M$ in $\binvn{t}$ with $\abs{M}\geq t+2$ and $\dim\vspan{M} = t$
leads to a $[\abs{M}-1,\abs{M}-t-1,\geq 5]$-code.}

In the following theorem, we summarise the consequences of the results of the previous section
to coding theory.
\begin{thm} \label{thm:CodesImprovements}
\begin{enumerate}
  \item There exists a $[191,176,5]$-code.
  \item There exist $[m - 1, m - 2n - 2,5]$-codes for
        $$m = 2^{n-1} + \frac{1}{2}\left(\floor{2^{n/2+1}+1} - \left( \floor{2^{n/2+1}+1} \bmod 4\right)\right)$$
        and odd $n\geq 3$.
  \item There exist $[m - 1, m - 2n - 2,5]$-codes for
        $$m = 2^{n-1} + 2^{3n/5-1} + 2^{2n/5-1}$$
        and $n=5,10,15,20,25,30,35$.
\end{enumerate}
\end{thm}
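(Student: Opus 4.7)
The plan is to invoke the one-to-one correspondence between Sidon sets and linear codes with minimum distance at least five, encoded in the Proposition immediately preceding the theorem. That Proposition assigns to every Sidon set $M\subseteq\binvn{t}$ of size $m$ a binary linear code of length $m-1$ with $t$ check bits and minimum distance at least five, obtained by translating $M$ to contain $0$, removing $0$ to get a sum-free Sidon set, and using the remaining $m-1$ vectors as columns of a parity-check matrix. So each of the three items reduces to selecting a concrete Sidon set and reading off the parameters.

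For item (i), the input is the Sidon set of size $192$ in $\binvn{15}$ already exhibited in \secref{sec:NewConstruction} (with an explicit example displayed in \tabref{tab:maxSidonExampleN15}); plugging $m=192$ and $t=15$ into the Proposition yields a $[191,176,5]$-code. For item (ii), the input is the Sidon set in $\binvn{2n-1}$ produced by \corref{cor:main} applied to the inverse $x\mapsto x^{-1}$ on $\binfn{n}$ for odd $n\geq 5$; its size $m$ was determined in the earlier theorem on the inverse, using the Lachaud--Wolfmann description of its Walsh spectrum, and setting $t=2n-1$ in the Proposition yields the advertised codes. For item (iii), the input is the Sidon set in $\binvn{2n-1}$ produced by \corref{cor:main} applied to the Dobbertin function $\binvn{n}\to\binvn{n}$; the restriction $n\in\{5,10,15,20,25,30,35\}$ is exactly the range in which the linearity $2^{3n/5}+2^{2n/5}$ has been verified in \cite{BudaghyanCalderiniCarletDavidovaKaleyski2020}, so \corref{cor:main} gives a Sidon set of the stated size $m$ and the Proposition finishes the construction.

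No step is technically demanding, since every ingredient has already been established in the preceding sections. The one cosmetic check is that for each of these Sidon sets the translate-and-remove step produces a sum-free Sidon set whose linear span fills the ambient space $\binvn{t}$, so that the parity-check matrix has full rank $t$ and the code really has the claimed dimension; this is immediate here, since the sizes $m-1$ easily exceed $t+1$ and the Sidon sets extracted from graphs of APN functions (intersected with a hyperplane) span the full ambient space.
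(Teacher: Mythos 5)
Your proposal is correct and matches the paper's (implicit) proof: the theorem is simply the preceding Proposition applied to the Sidon sets constructed in \secref{sec:NewConstruction} (the size-$192$ set in $\binvn{15}$, the inverse-function sets, and the Dobbertin-function sets), exactly as you describe. One small caveat: with $t=2n-1$ check bits the Proposition yields dimension $m-2n$ (consistent with item (a), where $192-16=176$), so the ``$m-2n-2$'' in items (b) and (c) looks like a typo in the statement rather than something your argument fails to deliver.
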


The $[191,176,5]$-code from \thmref{thm:CodesImprovements} (a) improves the best previously known length 151 of a linear code with $15$ check bits and with minimum distance $5$ (see \cite{Bhargava1983,Grassl:codetables}).
The codes from \thmref{thm:CodesImprovements} (b)
provide an infinite family of codes
with better length for the same odd number of check bits $t$ and minimum distance 5
than the codes from the Sidon sets with classical parameters (see \remref{rem:classical}).

\section{Summary and Table}
\label{sec:summary}

In this work we constructed large Sidon sets in $\binvn{2n-1}$
in \secref{sec:NewConstruction}.
We obtained these Sidon sets by intersecting the graph of a highly linear APN function
$\binvn{n}\to\binvn{n}$ with affine hyperplanes.

The inverse function gives an infinite family
of Sidon sets larger than those with classical
parameters if $n$ is odd\footnote{
An infinite family of Sidon sets, which improves the size
obtained by the inverse function by
one, was recently given in \cite{Thornburgh2025}.
}.
The known linearities of the Dobbertin functions give improvements.

In the case $n=8$, sporadic examples of quadratic APN functions
with high linearity $2^{7}$ lead to Sidon sets of size 192
in $\binvn{15}$, where the largest sets so far had size $152$.
If it is possible to find an infinite family of quadratic
APN functions on $\binvn{n}$ with linearity $2^{n-1}$
we would have Sidon sets of size $3\cdot2^{n-2}$.
This would give a considerable improvement
of the best known Sidon size.

Recall that each of the new large Sidon set $M$ in $\mathbb{F}_2^t$ yields a linear code
with $t$ check bits, minimum distance 5, and a length not known so far
(see \secref{sec:linear-codes}).

\tabref{tab:SidonSizesOverview} gives an overview about
the maximum size of Sidon sets in small dimensions.
In the table, the best known upper bound and the best known size so far
for dimension $\leq 15$ come from
\cite{Grassl:codetables} (\url{https://www.codetables.de}).
%For dimension $\geq 16$ they come from
%\cite{MinT:codetables} (\url{http://mint.sbg.ac.at}).
%updated by the upper bound from \cite{CzerwinskiPott2024}
%and by the classical sizes,
%which are the sizes of the Sidon sets with classical parameters.
For larger dimensions see
\cite{MinT:codetables} (\url{http://mint.sbg.ac.at}).
However, the classical sizes,
which are the sizes of the Sidon sets with classical parameters,
and improvements on the upper bound from \cite{CzerwinskiPott2024} have not yet been added.

It is interesting that the connection between
Sidon sets and APN functions which we discussed in this paper
can also be used to improve the upper bound
for the linearity of APN functions
(\corref{cor:ApnLinUpperBound}).

\begin{table}[ht]
  \moreSpaceBetweenRows{1.2}
  \centering

  \resizebox{\textwidth}{!}{%
  \begin{tabular}{@{}rcccc cccc ccccc@{}}\toprule
    $t$ &
    3 & 4 & 5 & 6  & 7  & 8  & 9  & 10 & 11 & 12 & 13  & 14  & 15 \\
    \midrule
    Best known upper bound &
    4 & 6 & 7 &  9 & 12 & 18 & 24 & 34 & 58 & 89 & 125 & 179 & 254 \\
    Best known size so far &
    4 & 6 & 7 &  9 & 12 & 18 & 24 & 34 & 48 & 66 & 82 & 129 & 152 \\
    Classical size &
    4 & 6 & 6 & 9 & 12 & 18 & 20 & 33 & 40 & 66 & 72 & 129 & 144 \\
    \midrule
    Dobbertin &
     & &   & &    & & 22 & &    & &    & &     \\
    Inverse &
     & & 6 & &    & & 22 & &    & & 74 & &     \\
    Quadratic, highest lin. &
     & &   & & 12 & &    & & 48 & &    & & \new{192} \\
    \bottomrule
  \end{tabular}
}

\resizebox{\textwidth}{!}{%
  \begin{tabular}{@{}rcccc cccc cccc@{}}\toprule
    $t$ &
    16 & 17 & 18 & 19 & 20 & 21 & 22 & 23 & 24 & 25  \\
    \midrule
    Upper bound from \cite{CzerwinskiPott2024} &
    360 & 510 & 723 & 1022 & 1446 & 2046 & 2895 & 4094 & 5791 & 8190 \\
%    Best known size so far &
%    258 & 272 & 513 & 544 & 1026 & 1056 & 2049 & 2112 & 4098 & 4160 \\
    Classical size &
    258 & 272 & 513 & 544 & 1026 & 1056 & 2049 & 2112 & 4098 & 4160 \\
        \midrule
    Dobbertin &
      &     &  & \new{552} \\
    Inverse &
      & \new{278} & & & & \new{1068} & & & & \new{4186} \\
%    Quadratic, highest lin. &
%      &     &  &     & &      & &      & &     \\
    \bottomrule
  \end{tabular}
}
  \caption{ \label{tab:SidonSizesOverview}
    Sizes of Sidon sets in $\binvn{t}$ and related bounds and constructions.
  }
\end{table}

\section*{Acknowledgement}
  We thank Claude Carlet and Gábor Nagy for useful comments on an early version of this paper
  and we are grateful to Alexandr Polujan for fruitful discussions.
  Our thanks also go to the anonymous reviewers for their valuable comments.
  Part of the research of the second author has been funded by the
  Deutsche Forschungsgemeinschaft (DFG, German Research Foundation) – Project No. 541511634 .

\printbibliography

\end{document}